\newtheorem{theorem}{Theorem}
\newtheorem{lemma}{Lemma}
\newtheorem{definition}{Definition}
\newtheorem{remark}{Remark}
\declaretheorem{example}
\renewcommand\thmcontinues[1]{Continued}
\newcommand{\R}{\mathbb{R}}
\newcommand{\N}{\mathbb{N}}
\newcommand{\Z}{\mathbb{Z}}
\newcommand{\A}{\mathcal{A}}
\newcommand{\cR}{\mathcal{R}}
\newcommand{\cH}{\mathcal{H}}
\newcommand{\cZ}{\mathcal{Z}}
\DeclareMathAlphabet\mathbfcal{OMS}{cmsy}{b}{n}
\newcommand{\cG}{\mathcal{G}}
\newcommand{\cB}{\mathcal{B}}
\newcommand{\CC}{\mathcal{C}}
\newcommand{\T}{\mathcal{T}}
\newcommand{\cN}{\mathcal{N}}
\renewcommand{\emptyset}{{\varnothing}}
\newcommand{\twocol}[1]{}
\newcommand{\no}{\raisebox{0.13\baselineskip}{\ensuremath{{\scriptstyle \#}}}}
\newcommand{\intcc}[1]{\ensuremath{{\left[#1\right]}}}
\newcommand{\qed}{\hfill \QED}
\definecolor{hlcolor}{rgb}{.5,.5,1}	
\newlength\dlf
\title{\LARGE \bf
On a notion of entropy for reachability properties
}
\author{Mahendra Singh Tomar and Majid Zamani
\thanks{*This work was supported by the NSF under Grant CMMI-2013969.}
\thanks{Mahendra Singh Tomar and Majid Zamani are with the Computer Science Department, 
        University of Colorado Boulder, USA. Majid Zamani is also with the Institute of Informatics, LMU Munich, Germany.
        {\tt\small mahendra.tomar@colorado.edu, majid.zamani@colorado.edu}}%
}
\begin{document}

\maketitle
\thispagestyle{empty}
\pagestyle{empty}

\begin{abstract}

In this work, we introduce a notion of reachability entropy to characterize the smallest data rate which is sufficient enough to enforce reach-while-stay specification. We also define data rates of coder-controllers that can enforce this specification in finite time. Then, we establish the data-rate theorem which states that the reachability entropy is a tight lower bound of the data rates that allow satisfaction of the reach-while-stay specification. For a system which is related to another system under feedback refinement relation, we show that the entropy of the former will not be larger than that of the latter. We also provide a procedure to numerically compute an upper bound of the reachability entropy for discrete-time control systems by leveraging their finite abstractions. Finally, we present some examples to demonstrate the effectiveness of the proposed results.

\end{abstract}


\section{INTRODUCTION}
	With the advancement in technology and production, 
	sensors and actuators are becoming smaller and less expensive. This promotes their extensive deployment in many real-life applications. 	
	Spatially distributed components including  plants, sensors, controllers, and actuators often exchange information over a  communication network, and they all together constitute networked control systems (NCS). Compared to classical control systems that involve point-to-point wiring between the sensors and the controllers and direct connection between them, NCS offer many advantages such as reduced wiring, cost efficiency, greater system flexibility and ease of modification. 
	NCS find applications in many areas such as traffic networks, power grids, manufacturing plants, automobiles and transportation networks.
	
	Unfortunately, the use of communication networks in feedback control loops makes the analysis and design of NCS much more complex. In NCS, the use of digital channels for data transfer from the sensors to controllers, limit the amount of information that can be transferred per unit of time, due to the finite data rate of the channel. This introduces quantization errors that can adversely affect the control performance. The problem of control and state estimation over a digital communication channel with a limited bit-rate has attracted a lot of attention in the last two decades; see for example \cite[and references therein]{hespanha2002towards,nair2003exponential,tatikonda2004control}.
	A survey of results on data-rate-limited control can be found in \cite{NairFagniniZampieriEvans07}. 

	For efficient utilization of the network resources, it becomes increasingly important that our feedback loops operate at the smallest permissible data rates. 
	A tight lower bound on the data rate of a digital channel between the coder and controller, to achieve some control task like stabilization or invariance, can be characterized in terms of some notions of \emph{entropy}  which is described as an intrinsic property of the system and is independent of the choice of the coder-controller.

In~\cite{NairEvansMarrelsMoran04}, the authors introduced a notion of topological feedback entropy as a measure of the rate at which a discrete-time control system generates information, with states confined in a given compact set. They established that the entropy is equal to the smallest average bit rate that allows to enforce invariance of a subset of the state space, and thus the channel must transfer information at a rate faster than the rate of information generation. They defined the entropy based on open covers, with each cover element assigned a finite length open loop control sequence. Later, the results in \cite{ColoniusKawan09} introduced a notion of invariance entropy for continuous-time control systems based on the minimum cardinality amongst all sets of control functions that can make the desired set invariant. For networks of control subsystems, the results in~\cite{kawan2015network} introduced a notion of subsystem invariance entropy to characterize the smallest data rate, from a centralized controller to the subsystems, needed to make a subset of the state space invariant. 

For discrete-time dynamical systems, the results in~\cite{savkin2006analysis} established that the channel data rate not less than the topological entropy is needed for state observation.
The notion of estimation entropy was introduced in~\cite{liberzon2017entropy} to describe the critical data rate for state estimation with a given exponential convergence rate. 
For networked systems, the results in~\cite{matveev2019comprehending} related the topological entropy with the minimal data rates for observation of the network's state.
Restoration entropy was introduced in~\cite{PartII} to describe the minimal data rate for state estimation above which the estimation quality  can also be exponentially improved.
The results in~\cite{sibai2017optimal} extended the notion of estimation entropy to the case of switched nonlinear dynamical systems and related it to the minimal data rate needed for state estimation with an error that decays exponentially but only after a specified period of time after each switch. For switched linear systems, the relation between the topological entropy as defined in~\cite{yang2018topological} and global exponential stability wass analyzed in~\cite{yang2019topological}.
For switched linear systems with arbitrary switching, the work in~\cite{berger2020worst} introduced a notion of worst-case topological entropy to describe the minimal data rate required for state observation with exponentially decreasing estimation error.

In this work, we focus on the setting depicted in Figure~\ref{f:1}, in which the state is encoded at the sensor side and transmitted over a noiseless digital channel to the controller located near the actuator.
We are interested in characterizing the minimal data rate of the digital channel in the feedback loop that permits the satisfaction of a desired control task. Given sets $T\subset Q\subset X$ and any initial state in $Q$, the task is to reach $T$ without ever leaving $Q$.
Here, for the first time, we describe the $(Q,T)$-reachability entropy and the associated definitions. Our definition of the reachability entropy is inspired from the proposed framework in~\cite{rungger2017invariance} and is based on the set of sequences of cover elements. We present the definition of the coder-controllers of interest and their data rates. We show that the smallest data rate amongst the coder-controllers that can enforce the reachability specification is given by the $(Q,T)$-reachability entropy. In addition, we relate the entropies of two systems related under a feedback refinement relation introduced in \cite{reissig2016feedback}. In particular, given a continuous-space control system and its finite abstraction, if the two systems are related under a feedback refinement relation, then an upper bound of the entropy of the original system can be computed using that of its finite abstraction. We describe a procedure to numerically compute an upper bound of the reachability entropy through construction of a graph, the details of which are provided in the description of Example~\ref{eg:rommtemp}. Finally, we elaborate on the results using three case studies.

%

\begin{figure}[t]
	\centering
	\begin{tikzpicture}[node distance=1.8cm, thick, >=latex]
		
		\tikzset { 
			block/.style={ 
				draw,
				thick, 
				rectangle, 
				minimum height = .8cm, 
				minimum width = 2cm},
		}
		\draw node at (0,0) [block] (F)  {System};
		\draw node at (3,-1) [block] (H)  {Sensor/Coder};
		\draw node at (-3,-1) [block] (C)  {Controller};
		
		\draw[->] (F.east)  -|  (H.north);
		\draw[->] (C.north)  |- (F.west);
		\draw[->] (H.south)  |-       (0,-2)        node[above] {\small digital
			noiseless channel $R$ bits/time unit}  -|          (C.south);
	\end{tikzpicture}
	\caption{Coder-controller feedback loop.}\label{f:1}
	\vspace{-0.5cm}
\end{figure}
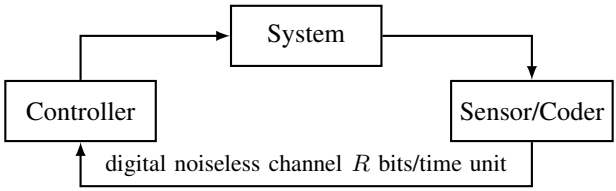

\emph{\textbf{Notation.}}
For a set $A$, we use $\no A$ to denote 
its cardinality.
{By $[k_1;k_2]$ we refer to the set of integers $\{j\mid k_1\leq j\leq k_2\}$.}
We use $f:A\rightrightarrows B$ to denote a set-valued map from $A$ to $B$. If $f$ is set-valued, then $f$ is \emph{strict} if for every $a\in A$ we have $f(a)\neq \emptyset$. The restriction of $f$ to a subset $M\subseteq A$ is denoted by $f|_M$. We use $B^A$ to denote the set of all functions $f:A\to B$. For a relation $R\subseteq A\times B$ and $D\subseteq A$, we define $R(D):=\cup_{d\in D}R(d)$. The concatenation of two functions $x:[0;a)\to X$ and $y:[0;b)\to X$ with $a,b\in\N$ is denoted by $xy$ and defined as $xy(t):=x(t)$ for $t\in[0;a)$ and $xy(t):=y(t-a)$ for $t\in[a;a+b)$. A \emph{cover} $\A$ of a set $B$, is a set of subsets of $B$ such that $B=\cup_{A\in \A}A$.

\section{Reachability Entropy}\label{sec:pre}

We define a \emph{system} as a triple 
\begin{equation}\label{e:sys}
	\Sigma:=(X,U,F),
\end{equation}
where $X$ and $U$ are
nonempty sets and $F:X\times U\rightrightarrows X$ is the transition function.
%
%




Given sets $T$ and $Q$, with $T\subset Q\subset X$, we are interested in coder-controllers that can steer any state in $Q$ to the set $T$ in finite time.
If this objective can be achieved, we say that $(Q,T)$-reachability is satisfiable for the system. 

\begin{definition}[$(Q,T)$-reachability]\label{def:QTreachability}
	$(Q,T)$-{reachability} is said to be satisfiable for $\Sigma$, if every state in $Q$ can be taken to the set $T$ in finite time using a finite set of control inputs, while never leaving $Q$. 
\end{definition}

Next we describe reach-spanning sets which are defined in terms of the sequences of cover elements.
Consider a tuple $(\A,G)$ 
where $\A$ is a finite cover of $Q\backslash T$, and a map $G:\cup_{t\in\Z_{\geq 0}}\A^{[0;t]}\to U$. 
%
%
%
Further consider a set $\cR = \{\alpha_1,\ldots,\alpha_N\}$ where $N\in\N$ and $\alpha_i$ is a sequence of length $\tau_i\in \N$ in the power set of $Q$.



 \begin{definition}[$(Q,T)$-reach-spanning set]\label{def:reachSpanning}
	A set $\cR= \{ \alpha_1, \ldots, \alpha_N\}$ is called $(Q,T)$-reach-spanning in $(\A,G)$ if it satisfies the following three conditions:
\begin{enumerate}
	\item $\{\alpha_i(0)\mid \alpha_i\in\cR\}$ covers $Q$, i.e., $Q = \cup_{i\in\intcc{1;N}}\alpha_i(0)$,
	\item\label{def:reachSpanning:finalElement} only the last element of every sequence is equal to $T$  while all other elements of the sequence do not intersect the target set, i.e., $\alpha_i|_{[0;\tau_i-2]}\in \A^\intcc{0;\tau_i-2}$, $\alpha_i(\tau_i-1)= T$,
	\item \label{def:reachSpanning:successorsCoverPost} for every sequence $\alpha_i$ and $t\in\intcc{0;\tau_i-2}$, the following holds
	\begin{equation*}
		F(\alpha_i(t),G(\alpha_i|_{[0;t]})) \subseteq \cup_{A\in P_\cR(\alpha_i|_{\intcc{0;t}})}A,
	\end{equation*}
	where 
	\begin{multline*}
		P_\cR(\alpha_i|_{\intcc{0;t}}) := \{A\in \A\cup\{T\}\mid \\ \exists_{\alpha_k\in\cR}  \alpha_k|_{\intcc{0;t}}=\alpha_i|_{\intcc{0;t}}  \wedge  A=\alpha_k(t+1) \}.
	\end{multline*}
\end{enumerate}
\end{definition}

{Consider a trivial case of the  $(Q,T)$-reachability satisfaction with a single control value.
For $\A_1:= \{A\}$, $A:= Q\backslash T$ and a fixed $u_1\in U$, consider a map 
${G_1}(\alpha):=u_1$
for all $\alpha\in \cup_{t\in\Z_{\geq 0}}\A_1^{[0;t]}$.
We also call a set 
$\cR := \{T\}$
to be $(Q,T)$-reach-spanning in $(\A_1,G_1)$ if every state in $Q\backslash T$ can be taken to $T$ with the control input $u_1$ in finite time, more precisely, 
for any $x\in Q\backslash T$ for any trajectory\footnote{By trajectory $\xi_x$, we refer to any sequence of states such that $\xi_x(0) = x$ and $\xi_x(t)\in F(\xi_x(t-1),u_1)$ $\forall t \geq 1$. } $\xi_{x}$ there exists  $n_{\xi_x}\in \mathbb{N}$ such that $\xi_x(n_{\xi_x})\in T$ and $\xi_x(t)\in Q\backslash T$, for $t\in [0;n_{\xi_x}-1]$.

}

	Note that, for $\alpha\in\cR$, $P_\cR(\alpha|_{\intcc{0;t}})$ gives the set of the successors of the sequence $\alpha|_{\intcc{0;t}} $ in the set $\cR$.
	Similar to $P_\cR$, we also define $\hat{P}_\cR$ which gives the set of the successors excluding the target set, i.e.,
	\begin{equation*}
		\hat{P}_\cR(\alpha_i|_{\intcc{0;t}}) := P_\cR(\alpha_i|_{\intcc{0;t}}) \backslash T.
	\end{equation*}

	The following lemma relates the satisfaction of $(Q,T)$-reachability with the existence of a $(Q,T)$-reach-spanning set. 
	\begin{lemma}\label{lem:QTreachabilityReachSpanningSet}
		For a system $\Sigma$, $(Q,T)$-reachability is satisfiable if and only if there exists a $(Q,T)$-reach-spanning set $\cR$ in some $(\A,G)$, where $\A$ is a finite cover of $Q\backslash T$ and $G:\cup_{t\in\Z_{\geq 0}}\A^{[0;t]}\to U$ is a map.
	\end{lemma}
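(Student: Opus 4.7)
The proof splits into the two implications; the $(\Leftarrow)$ direction is constructive and direct, while $(\Rightarrow)$ requires translating an abstract reachability strategy into the combinatorial tuple $(\A,G,\cR)$.

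For sufficiency, I would fix an arbitrary $x_0 \in Q$ and inductively build a trajectory that reaches $T$. At time $0$, condition~1 of Definition~\ref{def:reachSpanning} supplies some $\alpha_{i_0} \in \cR$ with $x_0 \in \alpha_{i_0}(0)$; apply $u_0 := G(\alpha_{i_0}|_{[0;0]})$. For any nondeterministic successor $x_1 \in F(x_0,u_0)$, condition~3 places $x_1$ into either $T$ (done) or some cover element $\alpha_{i_1}(1)$ for a sequence $\alpha_{i_1} \in \cR$ extending the prefix $\alpha_{i_0}|_{[0;0]}$; then iterate. Condition~2 forces the tracked prefix to end at $T$ once $t=\tau_{i_t}-1$, so termination occurs within $\max_i \tau_i-1$ steps. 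Because every $A\in\A$ lies in $Q\backslash T$, the trajectory remains in $Q$, and the controls invoked all belong to the finite set $\{G(\alpha_i|_{[0;t]}) \mid \alpha_i\in\cR,\ 0\leq t\leq\tau_i-2\}$. The trivial case $\cR=\{T\}$ is handled by its own explicit definition in the excerpt.

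For necessity, let $V\subset U$ be a finite set of controls witnessing satisfiability and let $\sigma$ be an accompanying strategy steering each $x_0\in Q$ into $T$ in bounded time without leaving $Q$. My plan is to define $\A$ by recursive partitioning of $Q\backslash T$ along $\sigma$'s decisions: group together states on which $\sigma$ prescribes the same initial control, then refine each group by the control $\sigma$ prescribes after each possible one-step successor, continuing until $T$ is reached. This produces a finite cover $\A$ and a finite bounded-depth tree of cover-element sequences whose root-to-leaf paths form $\cR$; set $G$ equal to the control $\sigma$ assigns on each such prefix. Conditions~1--3 of Definition~\ref{def:reachSpanning} then hold almost tautologically from the properties of $\sigma$.

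The main obstacle is in the necessity direction: making the construction yield a \emph{finite} $\cR$ and a well-defined $G$ under nondeterministic $F$. This requires reading ``finite time'' in Definition~\ref{def:QTreachability} as a uniform bound on trajectory length and assuming that the underlying strategy depends only on the cover-element history rather than on the precise state; if the latter fails, one must further refine $\A$ until $G$ becomes single-valued on the resulting cover-element sequences.
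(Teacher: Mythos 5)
Your proposal is correct and follows essentially the same route as the paper: the sufficiency direction is the identical inductive trajectory-tracking argument (condition~1 to start, condition~3 to step, condition~2 plus finiteness of $\cR$ to terminate within $\max_i\tau_i-1$ steps). For necessity the paper offers no construction at all---it merely asserts that finiteness of the control set and of a horizon $\tau_m$ ``clearly implies'' the existence of a reach-spanning set---so your refinement of $Q\backslash T$ along the strategy's decisions, and the caveats you flag (reading ``finite time'' as a uniform bound, which the paper silently adopts by writing ``within some finite time $\tau_m$'', and refining until $G$ is well defined on cover-element histories), supply exactly the detail the paper leaves implicit rather than a different argument.
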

	\begin{proof}
		From Definition~\ref{def:QTreachability}, the satisfaction of $(Q,T)$-reachability implies that there exists a finite subset $\tilde{U}$ of $U$ such that every state in $Q$ can be taken to $T$ within some finite time $\tau_m$. The finiteness of $\tilde{U}$ and $\tau_m$ clearly implies existence of a $(Q,T)$-reach-spanning set $\cR$ in some $(\A,G)$.

		Now, consider a $(Q,T)$-reach-spanning set $\cR${, $\no \cR>1$} in some $(\A,G)$. 
		By the definition of $\cR$, for every $x_0\in Q$ there exists an $\alpha\in\cR$ such that $x\in \alpha(0)$. From condition~\ref{def:reachSpanning:successorsCoverPost} in Definition~\ref{def:reachSpanning}, we have that the next state $x_1$ under the control input $G(\alpha(0))$ lies in one of the successor cover element of $\alpha(0)$, i.e., $x_1\in A$ for some $A \in  P_\cR(\alpha|_{\intcc{0;0}})$. From the condition~\ref{def:reachSpanning:finalElement} in Definition~\ref{def:reachSpanning} we have that the last element of $\alpha$ equals $T$. Since every $\alpha\in\cR$ is of finite length, the state will eventually reach $T$ in finite time, without ever leaving $Q$.
	\end{proof} 

We define $\cR_0:=\{\alpha(0)\mid \alpha\in\cR, \alpha(0)\neq T\}$ to denote the set of first elements of the members of the set $\cR$ excluding the target set $T$.
For $\alpha_i\in \cR$ and $\tau_i=$ length$(\alpha_i)$, let
\begin{equation}\label{eq:Balpha}
	\cB(\alpha_i):= 
		\frac{1}{\tau_i-1}\big[ 	\sum_{t=0}^{\tau_i-3}\log_2\no \hat{P}_\cR(\alpha_i|_{\intcc{0;t}})+\log_2 \no\cR_0\big],  	
\end{equation}
when $\tau_i>1$  while  $\cB(\alpha_i):= 0$ for $\tau_i=1$.


%
We use $\cN(\cR)$ to denote the largest value assigned by $\cB$ to any $\alpha\in \cR$, i.e.
\begin{equation}\label{eq:NofR}
	\cN(\cR):= \max_{\alpha\in\cR}\cB(\alpha).
\end{equation}
Note that, if there exists $\alpha_k\in\cR$ with length $\tau_k=2$ then
\begin{align*}
	 \cB(\alpha_k) = \log_2 \no\cR_0.
\end{align*}
For any $\alpha_i\in\cR$, $\cB(\alpha_i)$ can be larger than $\log_2 \no\cR_0$ if $\no \hat{P}_\cR(\alpha_i|_{\intcc{0;t}}) >  \no\cR_0$ for some $t\in\intcc{0;\tau_i-{3}}$. If this is not the case, i.e., for every $\alpha_i\in \cR$, $t\in\intcc{0;\tau_i-{3}}$ we have $\no \hat{P}_\cR(\alpha_i|_{\intcc{0;t}}) <  \no\cR_0$, and if there exists an $\alpha_k\in\cR$ with $\tau_k=2$, then $\cN(\cR) = \log_2 \no\cR_0$.

Let us define:
\begin{multline*}
	r(\A,G,Q,T):= \min\{\cN(\cR)\mid \cR \text{ is }\\
	\text{$(Q,T)$-reach-spanning }  \text{set in } (\A,G) \}.
\end{multline*}
Then the $(Q,T)$-\emph{reachability entropy} is defined as 
\begin{equation*}
	h(Q,T):= {\inf_{(\A,G)}} r(\A,G,Q,T).
\end{equation*}

If $(Q,T)$-reachability is satisfiable for $\Sigma$ then $h(Q,T)$ is finite, otherwise it is defined to be infinite.



\section{Admissible Coder-Controller}\label{sec:coder-controller}
For a given system $\Sigma$, we focus on the set $\mathcal{H}(Q,T)$ of all coder-controllers that can take every state in $Q$ to $T$ in {finite time}, without ever leaving $Q$.
We consider the setup shown in Figure~\ref{f:1} where  a coder, located at the sensor side,
encodes the current state of the system using the
finite \emph{coding alphabet} $\bar{S} =S\cup \{s^\emptyset\}$. It transmits a symbol $s_t\in {S}$ via a digital 
noiseless channel to the controller.
The transmitted symbol $s_t$ might depend on all past states and is
determined by the \emph{coder function}
\begin{equation*}
	\textstyle
	\gamma:\bigcup_{t\in\Z_{\ge0}}Q^{\intcc{0;t}}\to \bar{S}.
\end{equation*}
For $\xi\in Q^{\intcc{0;t}}, t\in\Z_{\geq 0}$, if $\xi(k) \in T$ then $\gamma(\xi|_{\intcc{0;\hat{t}}}) := s^{\emptyset}$, for all $k\leq \hat{t}\leq t$. 
{When coder generates $s^\emptyset$, then no symbol is transmitted over the channel. }

Let $\mathcal{Z}$ denote the set of all possible finite-length symbol sequences generated in the closed loop by the coder-controller for initial states in $Q$. 
At time $t\in\Z_{\ge0}$, the controller received $t+1$ symbols $s_0\ldots s_t$, which
are used to determine the control input given by the
\emph{controller function} 
\begin{equation*}
	\textstyle
	\delta:\mathcal{Z}\to U.
\end{equation*}
When no symbol transmitted over the channel, a fixed input is applied by the controller to the system.


A $(Q,T)$ \emph{admissible coder-controller} for~\eqref{e:sys} is a triple $H:=(\bar{S},\gamma,\delta)$,
where $H\in\mathcal{H}(Q,T)$, $\bar{S}$ is a coding alphabet, and $\gamma$ and $\delta$ are compatible coder and controller function, respectively. 




Let $\hat{\mathcal{Z}}:= \{\omega s^\emptyset\mid \omega\in \mathcal{Z}\}$, i.e., elements of $\hat{\mathcal{Z}}$ are constructed by concatenation of the symbol $s^\emptyset$ with the elements of the set $\mathcal{Z}$. For $\zeta \in \hat{\mathcal{Z}}$, if $\tau$ is the length of $\zeta$, then for all $j\in\intcc{0;\tau-2}$, we have $\zeta(j)\in S$ and $\zeta(\tau-1)=s^\emptyset$. 
Note that $s^\emptyset$ is not transmitted over the channel, and is considered only to mark the end of a symbol sequence.

Given a symbol sequence $\zeta \in \hat{\mathcal{Z}}$, by $Z(\zeta|_{[0;t]})$ we denote the set of the possible successor coder symbols $s\in S$ of the symbol sequence
$\zeta|_{[0;t]}$ in the closed loop.
For $\zeta\in \hat{\mathcal{Z}}$ of length $\tau$ and $t \in [0;\tau-2]$, $Z(\zeta|_{[0;t]}) := \{s\in S\mid \zeta|_{[0;t]}s\in \hat{\mathcal{Z}} \}$. 


For notational convenience,  let us use the convention $Z(\emptyset):= \{\zeta(0) \mid \zeta \in \mathcal{Z} \} $, to account for $Z(\zeta|_{[0;0)})$ for any
sequence $\zeta$ in $\hat{\mathcal{Z}} $. 

We define the \emph{transmission data rate} of a coder-controller $H\in\mathcal{H}(Q,T)$ by
\begin{equation}\label{eq:datarate}
	R(H) := \max_{
			\zeta\in\hat{\mathcal{Z}}, {\tau>1}} \frac{1}{\tau-1}\sum_{t=0}^{\tau-2}\log_2 \no Z(\zeta|_{[0;t)}),
\end{equation}
where $\tau$ is the length of the symbol sequence $\zeta$.
{For the trivial coder-controller $H\in \cH(Q,T)$ that does not involve any symbol transmission $(\hat{\mathcal{Z}}=\{s^\emptyset\})$, 
we define $R(H) = 0$.
}


\begin{theorem}[data-rate theorem]\label{thm:dataRateThm}
	The $(Q,T)$-reachability entropy equals the smallest average data rate amongst the coder-controllers that can take every state in $Q$ to $T$, while never leaving $Q$, in finite time, i.e.,
	\begin{equation}
		h(Q,T) = {\inf_{H\in \cH(Q,T)}} R(H).
	\end{equation}
\end{theorem}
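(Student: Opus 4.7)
The plan is to prove the two inequalities separately by translating back and forth between $(Q,T)$-reach-spanning sets and admissible coder-controllers, arranging that the summations in \eqref{eq:Balpha} and \eqref{eq:datarate} align term by term. For $\inf_{H\in\cH(Q,T)} R(H)\le h(Q,T)$, start from any $(\A,G)$ and $(Q,T)$-reach-spanning set $\cR$ in $(\A,G)$ with $\cN(\cR)=r$, and construct a coder-controller $H$ whose closed-loop symbol sequences mimic the cover sequences in $\cR$. Use the alphabet $\bar S:=\{s_A\mid A\in\A\}\cup\{s^\emptyset\}$ and define $\gamma$ recursively: at each time $t$, using the state history $\xi|_{\intcc{0;t}}$ and the past symbols $\zeta|_{\intcc{0;t-1}}$ (which the coder locally decodes into a cover-element sequence $\alpha|_{\intcc{0;t-1}}$ consistent with some $\alpha_k\in\cR$), emit $s^\emptyset$ whenever $\xi(t)\in T$ and otherwise pick any $A\in P_{\cR}(\alpha|_{\intcc{0;t-1}})\setminus\{T\}$ with $\xi(t)\in A$ (existence is ensured by condition~3 of Definition~\ref{def:reachSpanning}), emitting $s_A$; at $t=0$, select $A\in\cR_0$ containing $\xi(0)$ via condition~1. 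The controller applies $G(\alpha|_{\intcc{0;t}})$ to the decoded prefix. Condition~2 of Definition~\ref{def:reachSpanning} ensures the closed loop reaches $T$ in at most $\max_i\tau_i$ steps, so $H\in\cH(Q,T)$. By construction, $\no Z(\emptyset)\le\no\cR_0$ and $\no Z(\zeta|_{[0;t)})\le\no\hat P_{\cR}(\alpha|_{\intcc{0;t-1}})$ for $t\ge 1$, so the $\zeta$-summand of \eqref{eq:datarate} is at most $\cB(\alpha_\zeta)$, giving $R(H)\le\cN(\cR)=r$; taking the infimum over admissible triples yields the claim.

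For the reverse inequality $\inf_{H} R(H)\ge h(Q,T)$, take any $H=(\bar S,\gamma,\delta)\in\cH(Q,T)$ and read off a triple $(\A,G,\cR)$ directly from its execution. For each symbol prefix $\sigma\in S^\ast$ realised as $\zeta|_{\intcc{0;t}}$ for some $\zeta\in\hat{\mathcal{Z}}$, let $A_\sigma$ be the set of states attainable at time $t$ on some closed-loop trajectory whose coder output up to time $t$ equals $\sigma$, and take $\A$ to be the collection of these $A_\sigma$ labelled by their generating prefix; $\A$ covers $Q\setminus T$ because every $x_0\in Q\setminus T$ is mapped by $\gamma$ to some symbol in $S$. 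For $\zeta\in\hat{\mathcal{Z}}$ of length $\tau_\zeta$, set $\alpha_\zeta(t):=A_{\zeta|_{\intcc{0;t}}}$ for $t\le\tau_\zeta-2$ and $\alpha_\zeta(\tau_\zeta-1):=T$, let $\cR:=\{\alpha_\zeta\mid\zeta\in\hat{\mathcal{Z}}\}$, and define $G(\alpha_\zeta|_{\intcc{0;t}}):=\delta(\zeta|_{\intcc{0;t}})$ on consistent cover sequences (arbitrarily otherwise). The three conditions of Definition~\ref{def:reachSpanning} follow directly from the closed-loop dynamics: applying $\delta(\sigma)$ to any state in $A_\sigma$ lands either in $T$ or in $A_{\sigma s}$ for some $s\in Z(\sigma)$. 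The cardinality bounds $\no\hat P_{\cR}(\alpha_\zeta|_{\intcc{0;t}})\le\no Z(\zeta|_{\intcc{0;t}})$ and $\no\cR_0\le\no Z(\emptyset)$ then imply $\cB(\alpha_\zeta)\le(\zeta\text{-summand of }R(H))\le R(H)$, whence $\cN(\cR)\le R(H)$ and $h(Q,T)\le R(H)$.

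The main obstacle I anticipate is the finiteness requirement on $\cR$: Definition~\ref{def:reachSpanning} insists on $N\in\N$, but a generic $H\in\cH(Q,T)$ only guarantees a (possibly state-dependent) finite reaching time, so $\hat{\mathcal{Z}}$ and the family $\{\alpha_\zeta\}$ constructed above can a priori be infinite. Mirroring the argument in the proof of Lemma~\ref{lem:QTreachabilityReachSpanningSet}, non-emptiness of $\cH(Q,T)$ forces $(Q,T)$-reachability to be satisfiable and hence yields a uniform reaching-time bound $\tau_m$, which should permit truncating $\hat{\mathcal{Z}}$ to a finite sub-collection generating a valid $(Q,T)$-reach-spanning set without inflating the data-rate estimate. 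Labelling cover elements by their generating prefix (rather than identifying distinct prefixes that happen to produce identical subsets of $X$) is the other bookkeeping detail needed to keep the counts $\no\hat P_\cR(\cdot)$ and $\no Z(\cdot)$ comparable throughout.
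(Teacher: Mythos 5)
Your proposal is correct and follows essentially the same route as the paper: both inequalities are obtained by the same back-and-forth translation between reach-spanning sets and coder-controllers, identifying coder symbols with cover elements and matching the cardinality bounds $\no Z(\cdot)$ against $\no\cR_0$ and $\no\hat P_{\cR}(\cdot)$ term by term (the paper additionally phrases each direction with an $\varepsilon$-approximate minimizer before letting $\varepsilon\to 0$). The finiteness concern you flag for $\cR$ and $\A$ is genuine and is in fact glossed over in the paper's own proof; your suggested repair via a uniform reaching-time bound is the natural one.
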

\begin{proof}
    \emph{Showing $h(Q,T)\leq {
    \inf_{H\in \cH(Q,T)} R(H)}$}:
	{for a fixed $\varepsilon>0$ we pick a coder-controller $H = (\bar{S},\gamma,\delta)\in \cH(Q,T)$ such that $R(H) \leq \inf_{\tilde{H}\in \cH(Q,T)} R(\tilde{H}) + \varepsilon$.
	We assume $H$ to be non-trivial, i.e., $R(H)>0$.    }
	{First we construct a $(Q,T)$-reach-spanning set from $H$.} 
	Consider $\cZ$, the set of all possible finite-length symbol sequences generated in the closed loop, for initial states in $Q$. From $\cZ$ we construct a finite cover of $Q\backslash T$. For $\zeta\in\cZ$ of length $\tau$ and $j\in[0;\tau-1]$, we define the set 
	\begin{multline}
	A({\zeta|_{[0;j]}}):= \{x_j\in Q\backslash T\mid \exists x_i\in A({\zeta|_{[0;i]}}) \\ \text{ for all } i\in[0;j-1] \wedge \gamma((x_i)_{i=0}^j)=\zeta(j)\}
	\end{multline}
	and assign $G((A({\zeta|_{[0;i]}}))_{i=0}^{j}):= \delta(\zeta|_{[0;j]})$.
	Let ${\A}:= \{A(\zeta|_{[0;j]})\mid \zeta\in\cZ, \tau = \text{ length}(\zeta),j\in [0;\tau-1]\}$. Note that 
	$\A$ is a cover of $Q\backslash T$.
	Now we define a set of sequences in $\A$. Let $\bar{\cR} := \{ (A(\zeta|_{[0;j]}))_{j=0}^{\tau-1}\mid \zeta\in\cZ \wedge \tau = \text{ length}(\zeta) \}$. 
	Then the set $\cR:= \{\alpha T\mid \alpha\in\bar{\cR}\} \cup \{T\}$, which is formed by concatenation of $T$ at the end of the elements in $\bar{\cR}$, satisfies all the three conditions in Definition~\ref{def:reachSpanning} and is a $(Q,T)$-reach-spanning set in $(\A,G)$. 
	Note that $\no Z(\emptyset) = \no\cR_0$ where $\cR_0 := \{\alpha(0)\mid\alpha\in\cR,\alpha(0)\neq T\}$. 
	Let $\hat{\cZ}:= \{ws^\emptyset\mid w\in\cZ$\}, then for any $\zeta\in \hat{\cZ}$ of length $\tau$, a cover sequence $\alpha|_{[0;\tau-2]} = (A(\zeta|_{[0;j]}))_{j=0}^{\tau-2} $ with $\alpha(\tau-1)=T$, we have $\no \hat{P}_\cR ( \alpha|_{[0;t]}) \leq \no Z(\zeta|_{[0;t]}) $ for all $t\in[0;\tau-2]$. Thus $\cN(\cR) \leq R(H)$ which leads to $h(Q,T)\leq R(H) { \leq \inf_{\tilde{H}\in \cH(Q,T)} R(\tilde{H}) + \varepsilon}$. {Since $\varepsilon>0$ is arbitrary, we get $h(Q,T)\leq \inf_{\tilde{H}\in \cH(Q,T)} R(\tilde{H}) $.   }
	

	\emph{Showing $h(Q,T)\geq  {\inf_{H\in \cH(Q,T)} R(H) }$:}
	let $h(Q,T)$ be finite. This implies $(Q,T)$-reachability is satisfiable, thus from Lemma~\ref{lem:QTreachabilityReachSpanningSet} we have the existence of a $(Q,T)$-reach-spanning set $\cR$ in some $(\A,G)$ with {$h(Q,T) + \varepsilon \geq \cN(\cR)$ for some fixed $\varepsilon>0$. We assume $\cR$ to be non-trivial, i.e., $\no \cR >1$.   }
	From $\cR$ we iteratively construct a coder-controller $H = (\bar{S},\gamma,\delta)$ that will also be an element of $\cH(Q,T)$. 
	
	Let at time $t=0$ the state of the system be $x_0\in Q$.
	If $x_0\in T$, then $\gamma(x_0):= s^\emptyset$.
	For $x_0\in Q\backslash T$, we define $s_0 = \gamma(x_0):= A_{0}$ and $\delta(s_0) := G(A_0)$ where $A_0 \ni x_0$ is an element of $\cR_0$ (if multiple such $A_0$, then randomly select one of them). The controller applies the control input $G(A_0) $ to the system. Let $x_1$ denote the next state at time $t=1$. From condition~\ref{def:reachSpanning:successorsCoverPost} in Definition~\ref{def:reachSpanning} we know that $x_1 \in A_1$ for some $A_1 \in P_\cR(A_0)$. 
	We define $s_1 = \gamma(x_0x_1):= A_1$ and $\delta(s_0s_1):= G(A_0 A_1)$ where $A_1 \ni x_1$ and $A_1 \in P_\cR(A_{0})$ (select one, if more than one such $A_1$ ). In this manner, we iteratively define the coder and the controller map $\gamma$ and $\delta$, respectively. 
	Following this scheme, let at time $t$ the state be $x_t$ and let $\alpha\in\cR$ be such that $x_j\in\alpha(j)$ for all $j\in[0;t]$. 
	By definition, every element $\alpha_i$ in $\cR$ is of some finite length $\tau_i$, and $\alpha_i(\tau_i-1)=T$ and $\alpha_i(k)\cap T =\emptyset$, for all $t\in[0;\tau-2]$. Then $x_{\tau-1}\in T$ and we define $\gamma(x_0x_1 \ldots x_{\tau-1}):= s^\emptyset$. No symbol is transmitted at time $\tau-1$. Here, the symbol alphabet is $\bar{S} = S\cup \{s^\emptyset\}$ with ${S} = \{\alpha(t)\mid\alpha\in\cR\} $. This completely defines the coder and the controller map $\gamma$ and $\delta$, respectively. Clearly the coder-controller $H$ is a member of the set $\cH(Q,T)$, and it enforces $(Q,T)$-reachability for $\Sigma$. 
	Let $\mathcal{Z}$ denote the set of all possible finite-length symbol sequences generated in the closed loop by the coder-controller $H$ for initial states in $Q$ and define $\hat{\mathcal{Z}}:= \{\omega s^\emptyset\mid \omega\in \mathcal{Z}\}$.
	Then {$\hat{\cZ} \subseteq \cR $}, with $s^\emptyset := T$.
	Here $Z(\emptyset) \subseteq \cR_0$ and for any $\zeta\in\hat{\cZ}$ of length $\tau$, $Z(\zeta|_{[0;t]}) \subseteq \hat{P}_\cR(\zeta|_{[0;t]})$, for all $t\in[0;\tau-2]$. This gives the data rate $R(H) \leq \cN(\cR) { \leq h(Q,T) + \varepsilon}$. {Since $\varepsilon>0$ is arbitrary, we get $h(Q,T)\geq \inf_{\tilde{H}\in \cH(Q,T)} R(\tilde{H})$}.
\end{proof}

%
Next, we present the relationship between entropies of two systems related with a feedback refinement relation. The result is utilized in the section that follows, on numerical overapproximation to compute an upper bound of the reachability entropy of a continuous-space control system by leveraging its finite abstraction. Particularly,
given a continuous-space control system $\Sigma_1$ which is related under a feedback refinement relation to its finite abstraction $\Sigma_2$, then from Theorem~\ref{thm:FRR} (presented below) we get that an upper bound for the entropy of the abstract system will also be an upper bound for the entropy of the original system.

\subsection{Related systems under feedback refinement relation}\label{sec:frr}
Let us first define the notion of feedback refinement relation introduced in \cite{reissig2016feedback}.
\begin{definition}
	Let $\Sigma_1$ and $\Sigma_2$ be two systems of the form
	\begin{equation}\label{eq:FRRsystems}
		\Sigma_i = (X_i,U_i,F_i) \text{ with } i\in \{1,2\}.
	\end{equation}
	A strict relation $R\subseteq X_1\times X_2$ is a feedback refinement relation from $\Sigma_1$ to $\Sigma_2$ if there exists a map $r:U_2\to U_1$ so that the following inclusion holds for all $(x_1,x_2)\in R$ and $u\in U_2$
	\begin{equation}\label{eq:FRR}
		R(F_1(x_1,r(u))) \subseteq F_2(x_2,u).
	\end{equation}
\end{definition}
Now, we present the main result of this subsection.
\begin{theorem}\label{thm:FRR}
	Consider two systems $\Sigma_i$, $i\in\{1,2\}$ of the form~\eqref{eq:FRRsystems}. Let $T_1\subseteq Q_1\subseteq X_1$ and $T_2\subseteq Q_2\subseteq X_2$ be nonempty sets. Suppose, that $R$ is a feedback refinement relation from $\Sigma_1$ to $\Sigma_2$, {$Q_1 = R^{-1}(Q_2)$}, $T_1 = R^{-1}(T_2)$, $\no R(x_1)=1$ for all $x_1\in X_1$ and $R^{-1}(x_2)\neq \emptyset$ for all $x_2\in Q_2$. Then
	\begin{equation}
		h_1(Q_1,T_1) \leq h_2(Q_2,T_2)
	\end{equation}
	holds.
\end{theorem}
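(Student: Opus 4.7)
The plan is to transfer any $(Q_2,T_2)$-reach-spanning set for $\Sigma_2$ into a $(Q_1,T_1)$-reach-spanning set for $\Sigma_1$ of no larger $\cN$-value, and then take infima on both sides. Because $\no R(x_1)=1$ for every $x_1\in X_1$, the relation $R$ is the graph of a single-valued map $\rho\colon X_1\to X_2$, and the assumption $R^{-1}(x_2)\neq\emptyset$ for $x_2\in Q_2$ says $\rho$ is surjective onto $Q_2$. Since $\rho$ is a function, its preimage commutes with set differences on subsets of $Q_2$, so from $Q_1=R^{-1}(Q_2)$ and $T_1=R^{-1}(T_2)$ one obtains $Q_1\setminus T_1=R^{-1}(Q_2\setminus T_2)$.

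Starting from an arbitrary pair $(\A_2,G_2)$ and a $(Q_2,T_2)$-reach-spanning set $\cR_2$ in it, I would construct a finite cover $\A_1:=\{R^{-1}(A)\mid A\in\A_2\}$ of $Q_1\setminus T_1$, lift each $\alpha_2\in\cR_2$ to a sequence $\alpha_1$ defined by $\alpha_1(t):=R^{-1}(\alpha_2(t))$ (so that the terminal $T_2$ becomes $T_1$), set $\cR_1:=\{\alpha_1\mid\alpha_2\in\cR_2\}$, and define $G_1(\alpha_1|_{[0;t]}):=r(G_2(\alpha_2|_{[0;t]}))$ on prefixes realised in $\cR_1$ (arbitrary elsewhere). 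Well-posedness and the later cardinality bookkeeping hinge on $R^{-1}$ being injective on $\A_2$, which is where surjectivity of $\rho$ onto $Q_2$ is used: since every $A\in\A_2$ satisfies $A\subseteq Q_2\setminus T_2\subseteq Q_2$, if $R^{-1}(A)=R^{-1}(A')$ then $A\setminus A'=\rho(R^{-1}(A\setminus A'))=\emptyset$, and symmetrically.

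The next step is to verify the three conditions of Definition~\ref{def:reachSpanning} for $\cR_1$. Conditions~1 and~2 follow immediately from the preimage identities just noted and the corresponding properties of $\cR_2$. The crux is condition~3: for $x_1\in\alpha_1(t)=R^{-1}(\alpha_2(t))$ set $x_2:=\rho(x_1)\in\alpha_2(t)$ and put $u:=G_2(\alpha_2|_{[0;t]})$; the feedback refinement inclusion $R(F_1(x_1,r(u)))\subseteq F_2(x_2,u)$ combined with the $\cR_2$-spanning property yields $\rho(y_1)\in\bigcup_{A\in P_{\cR_2}(\alpha_2|_{[0;t]})}A$ for every $y_1\in F_1(x_1,r(u))$, so $y_1\in R^{-1}(A)$ for one such $A$, and by construction $R^{-1}(A)\in P_{\cR_1}(\alpha_1|_{[0;t]})$.

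Finally, the same injectivity statement yields $\no\hat P_{\cR_1}(\alpha_1|_{[0;t]})=\no\hat P_{\cR_2}(\alpha_2|_{[0;t]})$ and $\no(\cR_1)_0=\no(\cR_2)_0$, whence $\cB(\alpha_1)=\cB(\alpha_2)$ on corresponding sequences and $\cN(\cR_1)=\cN(\cR_2)$. Taking the infimum over $(Q_2,T_2)$-reach-spanning sets in $(\A_2,G_2)$ and then over $(\A_2,G_2)$ gives $h_1(Q_1,T_1)\le h_2(Q_2,T_2)$; the degenerate cases ($h_2$ infinite, or $\cR_2=\{T_2\}$) dispatch trivially. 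I expect the main obstacle to be exactly the injectivity point above: without it, distinct cover elements of $\A_2$ could collapse in $\A_1$ and cause successor-count mismatches that would be awkward to compare with $\cR_2$, so it is reassuring that the two hypotheses on $R$ are engineered precisely to prevent this.
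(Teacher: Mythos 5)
Your proposal is correct and follows essentially the same route as the paper's proof: pull the cover back through $R^{-1}$, lift each sequence of $\cR_2$ elementwise, define $G_1 = r\circ G_2$ on lifted prefixes, verify condition~3 via the feedback-refinement inclusion $F_1(x_1,r(u))\subseteq R^{-1}(F_2(x_2,u))$, and use injectivity of $R^{-1}$ on subsets of $Q_2$ (from $\no R(x_1)=1$ and surjectivity onto $Q_2$) to get $\cN(\cR_1)=\cN(\cR_2)$. Your explicit set-theoretic argument for that injectivity is a slightly more detailed justification than the paper's one-line assertion, but the proofs are otherwise the same.
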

\begin{proof}
	Let $(Q_2,T_2)$-reachability be satisfiable for $\Sigma_2$, then $h_2(Q_2,T_2)$ is finite and from Lemma~\ref{lem:QTreachabilityReachSpanningSet} we have the existence of a $(Q_2,T_2)$-reach-spanning set $\cR_2$ in some $(\A_2,G_2)$ such that {$\cN(\cR_2) \leq h_2(Q_2,T_2) + \varepsilon$ for some fixed $\varepsilon>0$. 
	We assume $\cR_2$ to be non-trivial.     } 
	Since $T_1 = R^{-1}(T_2)$, $\no R(x_1)=1$ for all $x_1\in X_1$ and $R^{-1}(x_2)\neq \emptyset$ for all $x_2\in Q_2$, we have $R(T_1) = T_2$. Thus for any $A_2\subseteq Q_2\backslash T_2$ we have $R^{-1}(A_2) \subseteq Q_1\backslash T_1$.
	Let $\A_1:=\{A_1\subseteq Q_1\backslash T_1 \mid \exists_{A_2\in\A_2} R^{-1}(A_2)=A_1\}$, 
	and for $\alpha_2\in\cR_2$ with $\alpha_1=(R^{-1}(\alpha_2(j)))_{j=0}^{\tau-1}$ we define $G_1(\alpha_1|_{[0;i]}):=r(G_2(\alpha_2|_{[0;i]}))$ for all $i\in[0;\tau-1]$ with $\tau = $ length$(\alpha_2)$. 
	Now we show that the set $\cR_1:= \{ (R^{-1}(\alpha(j)))_{j=0}^{\tau-1}\mid \alpha\in \cR_2\wedge \tau = \text{ length}(\alpha)\} $ is $(Q_1,T_1)$-reach-spanning in $(\A_1,G_1)$. 
	Since $R^{-1}(Q_2)=Q_1$, and $R^{-1}(Q_2\backslash T_2)\subseteq Q_1\backslash T_1$,      we have 
	$R^{-1}(Q_2\backslash T_2) = Q_1\backslash T_1$, and therefore
	$R^{-1}(Q_2\backslash T_2) = R^{-1}(\cup_{A_2\in\A_2}A_2) = $ $\cup_{A_2\in\A_2}R^{-1}(A_2) = $ $\cup_{A_1\in\A_1}A_1 = Q_1\backslash T_1$, thus $\A_1$ is a cover of $Q_1\backslash T_1$. 
	Since $ R^{-1}(T_2) = T_1$ and $R^{-1}(Q_2\backslash T_2)\subseteq Q_1\backslash T_1$ we conclude that the condition~\ref{def:reachSpanning:finalElement} in Definition~\ref{def:reachSpanning} holds for $\cR_1$. Equation~\eqref{eq:FRR} leads to $F_1(x_1,r(u))\subseteq R^{-1}(F_2(x_2,u))$ for all $(x_1,x_2)\in R$ and $u\in U_2$.
	For $\alpha_2\in \cR_2$ and $\alpha_1\in\cR_1$ such that $\alpha_1=(R^{-1}(\alpha_2(j)))_{j=0}^{\tau-1}$, $\tau =$ length$(\alpha_2)$, we observe
	\begin{align}
		F_1(\alpha_1(t),G_1&(\alpha_1|_{[0;t]})) = F_1(R^{-1}(\alpha_2(t)),r(G_2(\alpha_2|_{[0;t]}))) \nonumber \\
		&\subseteq  R^{-1}(F_2(\alpha_2(t),G_2(\alpha_2|_{[0;t]}))) \nonumber \\
		&\subseteq R^{-1} \left(\cup_{A_2\in P_{\cR_2}(\alpha_2|_{[0;t]})} A_2 \right) \label{eq:FRR:pf:RinvCup}  \\
		&= \cup_{A_2\in P_{\cR_2}(\alpha_2|_{[0;t]})} R^{-1}(A_2) \nonumber \\
		&= \cup_{A_1\in P_{\cR_1}(\alpha_1|_{[0;t]})} (A_1). \nonumber
	\end{align}
	The second set inclusion in equation~\eqref{eq:FRR:pf:RinvCup} holds because $\cR_2$ is a $(Q_2,T_2)$-reach-spanning set.
	Thus $\cR_1$ satisfies all the conditions in Definition~\ref{def:reachSpanning}, and is a $(Q_1,T_1)$-reach-spanning set in $(\A_1,G_1)$. Since $\no R(x_1)=1$ for all $x_1\in X_1$ and $R^{-1}(x_2)\neq \emptyset$ for all $x_2\in Q_2$, we have $R^{-1}(A_2)\neq R^{-1}(\tilde{A_2})$ for $A_2\neq \tilde{A_2} \in \A_2$. Thus, for $\alpha_2\in \cR_2$ and $\alpha_1\in\cR_1$ such that $\alpha_1=(R^{-1}(\alpha_2(j)))_{j=0}^{\tau-1}$, $\tau =$ length$(\alpha_2)$, we have $\no\hat{P}_{\cR_2}(\alpha_2|_{[0;t]}) = \no \hat{P}_{\cR_1}(\alpha_1|_{[0;t]})$ for every $t\in[0;\tau-2]$. We also have $\no\cR_{1,0} = \no\cR_{2,0}$ where $\cR_{i,0}:=\{\alpha_i(0)\mid \alpha_i\in\cR_i, \alpha_i(0)\neq T\}$, $i\in\{1,2\}$. Therefore, $\cN(\cR_1) = \cN(\cR_2) {\leq h_2(Q_2,T_2) + \varepsilon}$ and {since $\varepsilon>0$ is arbitrary, we get} $h_1(Q_1,T_1) \leq h_2(Q_2,T_2)$.
	{Note that for the trivial case $\cR_2 = \{T_2\}$, we can use arguments similar to above to show that $\cR_1=\{T_1\}$ will also be a trivially $(Q_1,T_1)$-reach-spanning set.}
\end{proof}

\begin{remark}
Note that assumptions $\no R(x_1)=1$ $\forall x_1\in X_1$ and $R^{-1}(x_2)\neq \emptyset$ $\forall x_2\in Q_2$ in Theorem \ref{thm:FRR} are not restrictive at all and one can always construct finite abstractions as in \cite{reissig2016feedback} satisfying these two conditions. 
\end{remark}
\section{Numerical Overapproximation}\label{sec:numerics}
In this section we describe a procedure for the numerical estimation of $h(Q,T)$ for discrete-time control systems using their finite abstractions.
Consider a system $\Sigma = (X,U,F)$ of the form~\eqref{e:sys} and sets $T\subseteq Q\subseteq X\subseteq \R^n$ and $U\subseteq \R^m$ such that $(Q,T)$-reachability is satisfiable for $\Sigma$. 
We use \texttt{SCOTS} \cite{rungger2016scots} to obtain a controller $\CC:{\A}\to U$ for $(Q,T)$-reachability.
\texttt{SCOTS} is a software tool, written in C++, for automated controller synthesis for nonlinear control systems based on finite abstractions. 
Let $Q\backslash T=\cup_{A\in{\A}}A$, then the domain ${\A}$ of the controller is a partition of $Q\backslash T$. 
Now we describe the construction of a set  $\bar{\cR} = \{\alpha_1, \ldots, \alpha_N\}$ where $N\in \N$, $\alpha_i$ is a sequence of length $\tau_i$, 
$\alpha_i|_{[0;\tau-2]} \in \A^{[0;\tau_i-2]}   $, $\tau_i\in\N$, $1\leq i \leq N$, and $\alpha_i(\tau-1) = T$.
For this, we first  define a set-valued map $D:{\A}\rightrightarrows \A\cup \{T\}$
so that, for each $A\in {\A}$, the set $D(A)$ gives all those elements of ${\A}\cup \{T\}$ that have a nonempty intersection with the image of $A$ with $\CC(A)$ as the control input under the transition function $F$, i.e.,  $D(A):= \{A\in \A\cup \{T\} \mid A\cap F(A,\CC(A))\neq \emptyset\}$. 
Let $\alpha_1(0):= A_1\in {\A}$, $\alpha_1(1):= A_2\in D(\alpha_1(0))$ and so on. 
The sequence $\alpha_1$ terminates when $\alpha_1(j) = T$ and $\tau_1:=$ length$(\alpha_1) = j-1$. 
The set $\bar{\cR}$ is defined to be the collection of all such sequences, i.e., $\bar{\cR}:=\{ (\alpha_i(j))_{j=0}^{\tau_i-1} \mid \alpha_i(0)\in {\A}, \alpha_i(k)\in D(\alpha_i(k-1)), 1 \leq k \leq \tau_i -1, \alpha_i(\tau_i-1) = T \} $.
Let ${\cR}:= \bar{\cR} \cup \{T \}$.
Note that by construction, the controller $\CC$ is such that every state in $Q\backslash T$ will eventually reach $T$ in finite time, while never leaving $Q$. Thus, every $\alpha_i\in\cR$ will have the last element as $T$. 
Next, we define the map $G:\cup_{t\in\Z_{\geq 0}}\A^{[0;t]}\to U$. For $t\in\Z_{\geq 0}$, $\alpha\in \A^{[0;t]}$, define $G(\alpha):= \CC(\alpha(t))$, if $\alpha(t)\in {\A}$, and $G(\alpha):= u$, for some fixed $u\in U$ when $\alpha(t) = T$.   
Note that $\cR$ satisfies all the three conditions in Definition~\ref{def:reachSpanning}, thus it is a $(Q,T)$-reach-spanning set, and thus $h(Q,T)\leq \cN(\cR)$.
We describe the procedure to compute $\cN(\cR)$ in the next section via Example~\ref{eg:rommtemp}.


\section{Examples}\label{sec:Examples}

In this section we present three examples. For the  system with discrete $X$ and $U$ in Example~\ref{eg:discrete}, we compute the value of the reachability entropy and also a $(Q,T)$-admissible coder-controller with data rate equal to the entropy. 
For the scalar linear system in Example~\ref{eg:scalar} with a stable eigenvalue, we show that a non-zero bit rate is required for $(Q,T)$-reachability. Finally, for the case of room temperature control in a circular building in Example~\ref{eg:rommtemp}, we elaborate on the numerical procedure to compute an upper bound of the reachability entropy. 

\begin{example}\label{eg:discrete}
	Consider an instance of~\eqref{e:sys} with $U=\{a,b\}$, $X=\{0,1,2,3\}$ and $F$ illustrated by the following state diagram.
	\begin{center} 
		\begin{tikzpicture}[->,thick,shorten >=1pt]
			\tikzstyle{state} = [draw, circle, minimum height=1.5em, minimum width=1.5em]
			
			\node[state] (0) at (-2,0) {$0$};
			\node[state] (1) at (0,0) {$1$};
			\node[state] (2) at (2,0) {$2$};
			\node[thin,dashed,state] (3) at (0,-1) {$3$};
		
      \path (0) edge  node[below] {$a$} (1);

      \path (2) edge  node[below] {$b$} (1);


      \path[thin,dashed] (0) edge[bend right]  node[near start,below] {$b$} (3);
      \path[thin,dashed] (2) edge[bend left ]  node[near start,below] {$a$} (3);
		\end{tikzpicture}
  \end{center}
  Let $Q=\{0,1,2\}$ and $T=\{1\}$. The transitions that lead outside $Q$ and the states that are outside $Q$ are marked by dashed lines. 
  
  First, we compute $h(Q,T)$ through construction of a $(Q,T)$-reach-spanning set.
  Consider a cover $\A:=\{A_1=\{0\}, A_2 = \{2\}\}$ of $Q\backslash T$ and for $t\in \Z_{\geq 0}$, $\alpha\in \A^{[0;t]}$, we define a map $G$ as $G(\alpha):= G(\alpha(t))$, $G(A_1):= a$ and  $G(A_2):= b$. 
  Further consider the sets $\cR:=\{A_1T, A_2T, T\}$ and $\cR_0:=\{A_1,A_2\}$. The set $\cR$ satisfies all the conditions in Definition~\ref{def:reachSpanning} and thus it is $(Q,T)$-reach-spanning in~$(\A,G)$. From equations~\eqref{eq:Balpha} and~\eqref{eq:NofR} we get
  $\cB(\A_1T)=\cB(\A_2T) = 1 $, $\cB(T)=0$ and $\cN(\cR)=1$.
  Since $\cR$ is the only $(Q,T)$-reach-spanning set in $(\A,G)$ and there does not exist any other values of the tuple $(\A,G)$ with a different $(Q,T)$-reach-spanning set, we obtain $h(Q,T)= 1$. 

  Next, we construct a coder-controller $H=(\bar{S},\gamma,\delta)\in \cH(Q,T)$. 
  Let $\bar{S}= S\cup \{s^\emptyset\}$, $S=\{0,2\}  $, $\gamma(0):=0$, $\delta(0):= a$, $\gamma(2):= 2$, $\delta(2) := b$, $\gamma(1):=s^\emptyset$, $\cZ:=\{0,2\}$, $\cZ(\emptyset)=\cZ$, and $\hat{\cZ}:=\{0s^\emptyset, 2s^\emptyset\}$. 
  The coder-controller $H$ is $(Q,T)$-admissible. Then, from~\eqref{eq:datarate} we obtain $R(H)=1$. \qed
\end{example}





\begin{example}\label{eg:scalar}
	Consider an instance of \eqref{e:sys} with $X=\R$, {$U=\{-0.5,0.75\}$} and 
    $F(x,u) = 0.5 x + u,$
	with a safe set {$Q=[0,1.4]\cup[2,6]$, and a target set $T=[0,1.4]$}. There is no single value of control input $u\in U$ that can enforce the $(Q,T)$-reachability.
	The set $Q\backslash T$ needs a cover with at least two members, each of which is assigned a distinct control input. For example, consider a cover $\A=\{A_1,A_2\}$ where {$A_1 = [3.75,6]$ is assigned the control input $u_1=0.75$, and $A_2=[2,3.75]$ is assigned the control $u_2=-0.5$}. Then at time $t=0$ the coder must transmit one bit to distinguish between $A_1$ and $A_2$. Thus
	a {nonzero} bit rate is required to enforce $(Q,T)$-reachability. 
	\qed

\end{example}

Next, we numerically compute an upper bound of the reachability entropy for the problem of temperature regulation in a circular building consisting of $3$ rooms, each equipped with a heater. 

\begin{example}\label{eg:rommtemp}
	The temperature $\T_i$ of the room $i\in\{1,2,3\}$ is described by the following difference equation borrowed from~\cite{meyer2017compositional} 
	\begin{multline*}
		F_i(\T,u_i) = \T_i + \alpha(\T_{i+1}+\T_{i-1}-2\T_i) \\+ \beta(\T_e-\T_i) + \gamma(\T_h-\T_i)u_i,
	\end{multline*}
	where $\T_{i+1}$ and $\T_{i-1}$ are the temperature of the neighbour rooms (with $\T_0=\T_3$ and $\T_4=\T_1$), $\T_e=-1^o C$ is the outside temperature, $\T_h=50^oC$ is the heater temperature, $u_i\in[0,0.6]$ is the control input for room $i$ and the conduction factors are given by $\alpha=0.45$, $\beta=0.045$, and $\gamma=0.09$. The temperature is desired to be taken into the set $T=[22,24]^3$, while never leaving $Q=[17.4,24]^3$.

	With $\eta_s=[1.2\quad 1.2\quad 1.2]^T$ and $\eta_i=[0.01\quad 0.01\quad 0.01]^T$ as the state and input grid parameters, respectively, for tool \texttt{SCOTS}, we compute a controller $\CC:\cB \to U$ for $(Q,T)$-reachability, i.e., it forces the closed-loop system to reach $T$ in finite time from any state in $Q$, while never leaving $Q$. 
	The domain $\cB$ of the controller is a set containing $215
	$ state grid cells. 
	To obtain a smaller upper bound, it is desirable that the partition has lower cardinality. We use \texttt{dtControl} \cite{ashok2020dtcontrol} to group cells together, which have identical control inputs assigned to them under the map $\CC$.
	\texttt{dtControl} is a software tool, written in Python, for transforming memoryless symbolic controllers into various compact and more interpretable representations.
	The use of \texttt{dtControl} gives a coarse partition ${\A}$ with $72$ elements, and a map $\bar{\CC}:{\A}\to U$. 
	Note that for  $B\in \cB$ and $A\in{\A}$ with $B\subset A$, the sets $B$ and $A$ have the same controls assigned to them, i.e., $\CC(B) = \bar{\CC}(A)$.
	To obtain a $(Q,T)$-reach-spanning set, we now construct a directed weighted graph $\cG$, with ${\A}\cup \{T\}$ as the set of nodes. 
	For $A_1, A_2\in {\A}$, there is an edge from $A_1$ to $A_2$ if $F(A_1,\bar{\CC}(A_1))\cap A_2\neq \emptyset$. 
	All the outgoing edges for any node are assigned the same edge weight equal to the base-$2$ logarithm of the number of outgoing edges for the node.
	The set of sequences in ${\A}$, generated by traversing paths in the graph that start from any node and terminate at $T$, gives a set ${\cR}$ as described in Section~\ref{sec:numerics}. The set ${\cR}$ is a $(Q,T)$-reach-spanning set.
	For each path $\alpha$ (of length $\tau$) that terminate at $T$, we compute $\cB(\alpha)$ which is	defined in~\eqref{eq:Balpha}
	by 
	\begin{equation*}
		\cB(\alpha) = \frac{1}{\tau-1}[\sum_{t=0}^{\tau-3}w(\alpha(t),\alpha(t+1)) + \log_2\no \A],
	\end{equation*}
	where $w(\alpha(t),\alpha(t+1))$ is the weight of the edge from the node $\alpha(t)$ to the node $\alpha(t+1)$.
	Here, we obtain $\cN(\cR) = \max_{\alpha\in \cR}\cB(\alpha) = 6.1699 \geq h(Q,T)$. \qed

\end{example}


\section{Conclusion}
In this work, we introduced for the first time a new notion of reachability entropy defined in terms of the sequences of cover elements. The entropy is shown to be equal to the minimum data rate amongst coder-controllers that can enforce the rechability specification. 
We also described the relation between entropies of two systems related under a feedback refinement relation. Further, we provided a graph based numerical procedure for the estimation of the reachability entropy. Finally we illustrated the effectiveness of the results via three examples.








\bibliographystyle{IEEEtran}
\bibliography{literatur.bib}

\end{document}